\newtheorem{prethm}{{\bf Theorem}}
\newenvironment{thm}{\begin{prethm}{\hspace{ - 0.5
em}}}{\end{prethm}}
\newtheorem{preremark}[prethm]{Remark}
\newenvironment{rem}{\begin{preremark}{\hspace{ - 0.5
em}}}{\end{preremark}}
\newcommand{\Gb}{{\overline G}}
\tikzstyle{sensor}=[
\tikzstyle{naveqs}=[
\tikzstyle{naveqsT}=[
\tikzstyle{naveqsD}=[
\tikzstyle{vertex}=[circle, draw, fill, inner sep=0pt, minimum size=3pt]
\newcommand{\vertex}{\node[vertex]}
\title{\large \bf Algebraic connectivity of the second power of a graph}
\author{{\normalsize
{\sc B. Afshari} 
\footnote{afshari.b@ipm.ir}
}
}
\date{}
\begin{document}

\maketitle

\begin{abstract}
Denote the Laplacian of a graph $G$ by $L(G)$ and its second smallest Laplacian eigenvalue by $\lambda_2(G)$. 
If $G$ is a graph on $n\ge 2$ vertices, then it is shown that the second smallest eigenvalue of $L(G) + \frac{1}{n} L(\overline{G^2})$ is at least 1, where $\overline{G^2}$ is the complement of the second power of $ G $. 
As a corollary of this result, it is shown that 
\begin{itemize}
	\item $ n \, \lambda_2(G) \ge \lambda_2(G^2), $
	\item $ \lambda_2(G) \ge 1-\frac{|D_G|}{n}, $
	\item $ \lambda_2(G) + \lambda_2(\Gb) \ge 1, $
\end{itemize}
where $|D_G|$ is the number of vertices of eccentricity at least 3 in $G$. 
\end{abstract}

\noindent {\em 2010 {\rm AMS} Classification}: 05C50, 15A18\\
\noindent{\em Keywords}: Laplacian eigenvalues of graphs, second power of a graph, algebraic connectivity, eccentricity


\hspace{4mm}

Let $ G $ be a simple graph with vertex set $ V = \{v_1, v_2, \ldots , v_n\} $ and edge set $E$. 
We denote the \textit{complement} of $ G $ by $ \Gb $. 
For $v_i$, denote the set of neighbors of $v_i$ in $G$ by $N_i$ and its number by $d_i$. 
The \textit{distance} between vertices $v_i$ and $ v_j $, denoted
by $ \mathrm{dist}_G(v_i,v_j) $, is the number of edges in a shortest path joining them. 
If there is no such path, then we define $ \mathrm{dist}_G(v_i,v_j)=\infty $. 
The \textit{second power} of $ G $, denoted by $ G^2 $, is the graph with the same vertex set as $ G $ such that two vertices are adjacent in $ G^2 $ if and only if their distance is at most $ 2 $ in $ G $. 
Also denote the set of all (unordered) pairs of vertices with distance 2 in $ G $ by $\pi(G)$, that is,  
$$\pi(G) = \big\{ \{v_i, v_j\} :\, \mathrm{dist}_G(v_i,v_j) = 2 \big\}.$$
The \textit{eccentricity} of a vertex in $G$ is defined as the length of a longest shortest path starting at that vertex. 
Denote by $ D_G $ the set of all vertices of eccentricity at least 3 in $G$.

Let $ A(G) $ be the adjacency matrix of $ G $ and $D(G) = \mathrm{diag}(d_{1},d_{2},\ldots,d_{n})$. 
The \textit{Laplacian} matrix of $ G $ is $L(G) = D(G)-A(G)$. 
Clearly, $ L(G) $ and $L(G) + \frac{1}{n} L(\overline{G^2})$ are real symmetric matrices. 
From this and Ger\v{s}gorin's Theorem, it follows that the eigenvalues of these matrices are nonnegative real numbers. 
Denote the eigenvalues of $L(G)$ by
$$0=\lambda_1(G) \le \lambda_2(G) \le \cdots \le \lambda_n(G),$$
and the eigenvalues of $L(G) + \frac{1}{n} L(\overline{G^2})$ by
$$0=\lambda'_1(G) \le \lambda'_2(G) \le \cdots \le \lambda'_n(G),$$
where the corresponding eigenvector for $\lambda_1(G)$ and $\lambda'_1(G)$ is the all-one vector. 
The value $\lambda_2(G)$ is called the \textit{algebraic connectivity} of $ G $. 
\begin{thm}\label{mainthm}
	For any graph $ G $ of order $ n \ge 2 $, 
	\begin{enumerate}[\rm (i)]
		\item $\lambda'_2(G) \ge 1$, or equivalently, for any vector $z$, 
		\begin{align*}
			n \, z^T L(G) z \ge z^T L(G^2) z,
		\end{align*}
		\item $n \, \lambda_{2}(G) \ge \lambda_{2}(G^2)$,
		\item $\lambda_2(G) \ge 1-\frac{|D_G|}{n}$, 
		\item $\lambda_2(G) + \lambda_2(\Gb) \ge 1$. 
	\end{enumerate} 
\end{thm}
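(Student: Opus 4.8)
\section*{Proof proposal}

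The plan is to prove (i) first---it is the heart of the theorem---and to derive (ii)--(iv) from it by eigenvalue monotonicity together with two elementary combinatorial facts. For (i), I would begin from the identity $L(\overline{G^2}) = nI - J - L(G^2)$, where $J$ is the all-ones matrix. Since $\lambda'_2(G)$ is the minimum of the Rayleigh quotient of $L(G)+\frac1n L(\overline{G^2})$ over the orthogonal complement of the all-one vector, and since $J$ acts as $0$ there, the bound $\lambda'_2(G)\ge 1$ is equivalent to $z^T\!\big(L(G)-\tfrac1n L(G^2)\big)z\ge 0$, i.e. to the stated form $n\,z^TL(G)z\ge z^TL(G^2)z$ (both sides being unchanged under $z\mapsto z+c\mathbf 1$, this for $z\perp\mathbf 1$ is the same as for all $z$). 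Writing the quadratic forms as edge sums, $z^TL(G)z=\sum_{\{i,j\}\in E}(z_i-z_j)^2$ and $z^TL(G^2)z=\sum_{\{i,j\}\in E}(z_i-z_j)^2+\sum_{\{i,j\}\in\pi(G)}(z_i-z_j)^2$, so (i) reduces to the single core inequality
$$(n-1)\sum_{\{i,j\}\in E}(z_i-z_j)^2\ \ge\ \sum_{\{i,j\}\in\pi(G)}(z_i-z_j)^2.$$

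The main idea for the core inequality is to localize at common neighbors. Every distance-$2$ pair has at least one common neighbor, so I would fix a vertex $k$, set $u^{(k)}_a=z_a-z_k$ for $a\in N_k$, and observe that the contribution of the pairs of neighbors of $k$ that are non-adjacent in $G$ equals $u^{(k)\,T}L(\overline{G[N_k]})\,u^{(k)}$, the Laplacian form of the complement of the subgraph induced by $G$ on $N_k$. Summing over $k$ gives $\sum_k u^{(k)\,T}L(\overline{G[N_k]})u^{(k)}=\sum_{\{i,j\}\in\pi(G)}|N_i\cap N_j|\,(z_i-z_j)^2\ge\sum_{\{i,j\}\in\pi(G)}(z_i-z_j)^2$, so it suffices to bound this common-neighbor-weighted sum by $(n-1)\sum_{\{i,j\}\in E}(z_i-z_j)^2$. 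Using $u^TL(\overline H)u=|V(H)|\,\|u\|^2-(\mathbf 1^Tu)^2-u^TL(H)u$ to expand each local form, this is equivalent to $\sum_{\{a,b\}\in E}(1-\bar m_{ab})(z_a-z_b)^2\le\|L(G)z\|^2$, where $\bar m_{ab}$ denotes the number of vertices adjacent to neither $a$ nor $b$.

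I expect this last estimate to be the main obstacle. The naive routing $(z_i-z_j)^2\le 2(z_i-z_k)^2+2(z_k-z_j)^2$ along a common neighbor loses a factor $2$, and that loss is genuine: the star $K_{1,n-1}$ is the extremal graph, attaining equality in the core inequality, so the cross terms carried by $(\mathbf 1^Tu)^2$ and by $\|L(G)z\|^2$ must be retained rather than discarded (indeed, the weaker per-vertex bound that keeps only $|N_i\cap N_j|$ and $\mathbf 1^Tu$ already fails, e.g. on a triangle with a pendant vertex). I would therefore attack it as the single positive-semidefiniteness statement $L(G)^2\succeq L_\alpha$, where $L_\alpha$ is the signed edge-weighted Laplacian with weights $\alpha_{ab}=1-\bar m_{ab}$, seeking an explicit positive-semidefinite decomposition or, failing that, analyzing the structure forced at an extremal $z$.

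Finally, (ii)--(iv) follow cleanly. Since (i) says $nL(G)-L(G^2)$ is positive semidefinite, Weyl monotonicity gives $n\,\lambda_2(G)=\lambda_2\big(nL(G)\big)\ge\lambda_2\big(L(G^2)\big)$, which is (ii). For (iii), write $L(G)=\big(L(G)+\tfrac1n L(\overline{G^2})\big)-\tfrac1n L(\overline{G^2})$ and apply Weyl again: $\lambda_2(G)\ge\lambda'_2(G)-\tfrac1n\lambda_n(L(\overline{G^2}))\ge 1-\tfrac1n\lambda_n(L(\overline{G^2}))$; since a vertex is non-isolated in $\overline{G^2}$ exactly when it has eccentricity at least $3$, the graph $\overline{G^2}$ has precisely $|D_G|$ non-isolated vertices, so $\lambda_n(L(\overline{G^2}))\le|D_G|$ and (iii) follows. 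For (iv) I would apply (iii) to both $G$ and $\Gb$ and use that no vertex has eccentricity at least $3$ in both: if $\mathrm{dist}_G(w,x)\ge 3$ then every $y$ lies within distance $2$ of $w$ in $\Gb$ (if $y\not\sim_G w$ then $w\sim_{\Gb}y$; if $y\sim_G w$ then $y\not\sim_G x$, since otherwise $y$ would be a common neighbor of $w$ and $x$, so $w\sim_{\Gb}x\sim_{\Gb}y$), whence $D_G\cap D_{\Gb}=\varnothing$ and $|D_G|+|D_{\Gb}|\le n$; summing the two instances of (iii) gives $\lambda_2(G)+\lambda_2(\Gb)\ge 2-\frac{|D_G|+|D_{\Gb}|}{n}\ge 1$.
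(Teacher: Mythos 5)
Your derivations of (ii), (iii), (iv) from (i) are sound: (ii) is eigenvalue monotonicity exactly as in the paper; your (iii), via Weyl's inequality and $\lambda_{\max}(L(\overline{G^2}))\le |D_G|$, is a harmless variant of the paper's route (the paper instead notes that vertices of eccentricity at most $2$ in $G$ are universal in $G^2$, so $\lambda_2(G^2)\ge n-|D_G|$); and your proof that $D_G\cap D_{\overline{G}}=\emptyset$ for (iv) is the paper's own argument. The problem is (i), which is the entire content of the theorem, and there your proposal stops exactly where the difficulty begins. Your reduction to the core inequality $(n-1)\sum_{\{i,j\}\in E}(z_i-z_j)^2\ge\sum_{\{i,j\}\in\pi(G)}(z_i-z_j)^2$ is correct and is also the paper's first step. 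Your ``localization at common neighbors'' is, up to notation, the paper's identity $\sum_{\{r,s\}\subseteq N_k}(x_r-x_s)^2=d_k\sum_{r\in N_k}(x_r-x_k)^2-p_k^2$ (your $(\mathbf{1}^T u^{(k)})^2$ equals $p_k^2$ with $p_k=(L(G)z)_k$), and your reformulated target $\sum_{\{a,b\}\in E}(1-\bar{m}_{ab})(z_a-z_b)^2\le\|L(G)z\|^2$ is precisely the obstruction the paper also hits: after the common-neighbor counting one is left with $\sum_{\{a,b\}\in E}(n-1-|N_a\cup N_b|)(z_a-z_b)^2$, which can go negative only because of edges with $N_a\cup N_b=V$ (your weight-one edges). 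You explicitly concede this estimate is ``the main obstacle'' and offer only intentions (seek a positive-semidefinite decomposition of $L(G)^2-L_\alpha$, or analyze extremal $z$). That is a restatement of the problem, not a proof. Worse, the statement $L(G)^2\succeq L_\alpha$ is equivalent, by your own algebra, to the weighted inequality $\sum_{\pi(G)}|N_i\cap N_j|(z_i-z_j)^2\le(n-1)\sum_{E}(z_i-z_j)^2$, which is formally \emph{stronger} than the theorem, and you verify it nowhere. Note also that it has nontrivial equality cases (the star, or a single apex joined to two disjoint cliques); for the star $L_\alpha=L(G)$ and $L(G)^2-L(G)$ vanishes on an $(n-1)$-dimensional subspace, so any positive-semidefinite certificate must be exactly tight --- a soft decomposition will not simply fall out.

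What the paper actually does at this point --- and what your proposal is missing --- is a minimal-counterexample argument that handles the dominating edges by hand. Assume $G$ is a smallest graph with $\lambda_2'(G)<1$, with unit eigenvector $x$. The counting step shows some edge $\{v_i,v_j\}$ with $N_i\cup N_j=V$ has $x_i\neq x_j$. Setting $X=N_i\setminus(N_j\cup\{v_j\})$, $Y=N_j\setminus(N_i\cup\{v_i\})$, $Z=N_i\cap N_j$, the paper splits $G$ into $G_1$ (keep only the edges meeting $\{v_i,v_j\}$) and $G_2=G-\{v_i,v_j\}$; it proves the core inequality for $G_1$ directly --- this is the real work, involving the case $|X|=0$, the case where some $x_t$ with $v_t\in Z$ leaves the interval $(x_j,x_i)$, and otherwise a Cauchy--Schwarz estimate on $p_i$ and $p_j$; it applies minimality of the counterexample to $G_2$; and it adds the two inequalities, using that every pair of $\pi(G)$ lies in $\pi(G_1)$ unless it is an $X$--$Y$ pair, in which case it lies in $\pi(G_2)$. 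Some mechanism of this kind (induction on $n$ together with a hands-on analysis of dominating edges) is what you would need to supply; as written, your proposal proves (ii)--(iv) conditionally but leaves (i) open.
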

\smallskip
\begin{rem} 
	\rm 
	The item (iv) of Theorem~\ref{mainthm} was first proposed as a conjecture in \cite{yl,zsh} and then was studied in multiple studies \cite{fxwl,btf,flt,lsl,lw,cw,l,xm,zsh,at,cd,bamm,ba1,ba2} and was recently proved in \cite{ek}. 
\end{rem}

\begin{proof}[Proof of Theorem \ref{mainthm}] 
(i)$\Rightarrow$(ii): 
		This is obvious. 

\smallskip
(ii)$\Rightarrow$(iii): 
It is easy to see that the algebraic connectivity of a graph of order $n$ is at least the number of vertices with degree $n-1$. 
Now, using $(ii)$ and noting that a vertex of eccentricity at most 2 in $G$ is adjacent to all vertices in $G^2$, we get 
\begin{align*}
	\lambda_2(G) 
	\ge \frac{1}{n} \lambda_2(G^2) 
	\ge 1-\frac{|D_G|}{n}.
\end{align*}

\smallskip
(iii)$\Rightarrow$(iv): 
Note that $D_G \cap D_{\Gb} = \emptyset$. 
To see this, let $v_i\in D_{\Gb}$. 
So, there is some $v_k$ such that $\mathrm{dist}_{\Gb}(v_i,v_k) \ge 3$. 
This is equivalent to say that $N_{i} \cup N_{k} = V$. 
So, $\mathrm{dist}_G(v_i,v_j) \le 2$ for any $v_j\in V$, that is, $v_i\notin D_G$.  
Now, using $(iii)$, we get 
\begin{align*}
	\lambda_2(G) + \lambda_2(\Gb) 
	\ge 2- \frac{|D_G|+|D_{\Gb}|}{n}
	\ge 1 .
\end{align*}

\medskip
(i): 
	Note that $\lambda'_2(G) \ge 1$ means that for any vector $z$ orthogonal to the all-one vector, 
\begin{align*}
	\sum_{ \{v_r, v_s\} \in E }{ (z_r-z_s)^2 } + \frac{1}{n} \sum_{ \{v_r, v_s\} :\, \mathrm{dist}_G(v_r,v_s)\ge 3}{ (z_r-z_s)^2 } 
	\ge \sum_{v_r\in V}{ z_r^2 }. 
\end{align*}
	This is equivalent to saying that for any vector $z$, 
	\begin{equation}\label{eq1}
		\sum_{ \{v_r, v_s\} \in E }{ (z_r-z_s)^2 } + \frac{1}{n} \sum_{ \{v_r, v_s\} :\, \mathrm{dist}_G(v_r,v_s)\ge 3}{ (z_r-z_s)^2 } 
		\ge \sum_{v_r\in V}{ ( z_r- \frac{\sum_{v_r}{z_r}}{n} )^2 }. 
	\end{equation}
This is because the inequality~(\ref{eq1}) is invariant
under translating $z$ and we can assume that $\sum_{v_r}{z_r} = 0$.
	Now since
	$$\sum_{v_r \in V}{ ( z_r- \frac{\sum_{v_r}{z_r}}{n} )^2 } = \frac{1}{n} \sum_{ \{v_r, v_s\} \in E }{ (z_r-z_s)^2 },$$ 
the inequality~(\ref{eq1}) is equivalent to
	\begin{align*}
		(n-1) \sum_{ \{v_r, v_s\} \in E }{ (z_r-z_s)^2 } \ge \sum_{ \{v_r, v_s\} \in \pi(G)}{ (z_r-z_s)^2 }
	\end{align*}
that gives $n \, z^T L(G) z \ge z^T L(G^2) z$. 
	
	Our proof for $\lambda'_2(G)\ge1$ is by contradiction. 
Let $G$ be a graph with minimum number of vertices such that $\lambda'_2(G)<1$ and $x = (x_1, x_2, \ldots , x_n)^T$ be a unit eigenvector corresponding to $\lambda'_2(G)$. 
It follows that
	\begin{equation}\label{ContraryAssump}
		(n-1) \sum_{ \{v_r, v_s\} \in E }{ (x_r-x_s)^2 } < \sum_{ \{v_r, v_s\} \in \pi(G) }{ (x_r-x_s)^2 }. 
	\end{equation}
	Since the all-one vector is the eigenvector corresponding to $\lambda'_1(G)$, we have $\sum_{r=1}^n x_r = 0$. 
	Note that the claim holds if $n\le2$. 
	So, $n\ge3$. 

For $v_k\in V$, letting $p_k = \sum_{ v_r \in N_{k} }{ (x_k-x_r) }$ we get 
\begin{align*}
	\sum_{ \{v_r, v_s\} \subseteq N_{k} } {(x_r-x_s)^2 }
	&= d_{k} \sum_{ v_r\in N_{k} } { \big(x_r- (\frac{\sum_{v_s\in N_{k}} x_s}{d_{k}}) \big)^2 } \\
	&= \frac{1}{d_{k}} \sum_{ v_r\in N_{k} } { \big( d_{k} (x_r-x_k) + d_{k} x_k - \sum_{v_s\in N_{k}} x_s \big)^2 }  \\
	&= \frac{1}{d_{k}} \sum_{ v_r\in N_{k} } { \big( d_{k} (x_r-x_k) + p_k \big)^2 }  \\
	&= d_{k} \sum_{ v_r\in N_{k} } {(x_r-x_k)^2} \,+ p_k^2 + \sum_{ v_r\in N_k } {2\, p_k (x_r-x_k)}  \\
	&= d_{k} \sum_{ v_r\in N_{k} } {(x_r-x_k)^2}  \,- p_k^2 .
\end{align*}

For ease of notation let $\ell_{rs} = |x_r - x_s|$, $r,s=1,\ldots,n$. 
Now we can deduce that 
\begin{align*}
	\sum_{ \{v_r, v_s\} } |N_{r} \cap N_{s}| \ell_{rs}^2 
	&= \sum_{k=1}^{n} { \sum_{ \{v_r, v_s\} \subseteq N_{k} } \ell_{rs}^2 } \\
	&\le \sum_{k=1}^{n} { \big( d_{k} \sum_{ v_r\in N_{k} } {\ell_{rk}^2} \big)  } \\
	&= \sum_{ \{v_r, v_s\} \in E }  (d_{r}+d_{s}) \ell_{rs}^2,
\end{align*}
and in consequence, 
\begin{align*}
	\sum_{ \{v_r, v_s\} \in E }{(n-1) \ell_{rs}^2} - \sum_{ \{v_r, v_s\} \in \pi(G)}{\ell_{rs}^2} 
	&\ge \sum_{ \{v_r, v_s\} \in E }{(n-1) \ell_{rs}^2} - \sum_{ \{v_r, v_s\} \notin E}{|N_{r}\cap N_{s}|\ell_{rs}^2} \\
	&=  \sum_{ \{v_r, v_s\} \in E }{(n-1+|N_{r}\cap N_{s}|) \ell_{rs}^2} - \sum_{ \{v_r, v_s\} }{|N_{r}\cap N_{s}|\ell_{rs}^2} \\
	&\ge \sum_{ \{v_r, v_s\} \in E }{ (n-1-|N_{r}\cup N_{s}| ) \ell_{rs}^2} .
\end{align*} 
If $\ell_{rs}=0$ for any $\{v_r, v_s\} \in E$ with $|N_{r}\cup N_{s}| = n$, then the last term in the above is nonnegative, a contradiction. 
So there is some $\{v_i, v_j\} \in E$ with $N_{i}\cup N_{j} = V$ and $x_j<x_i$. 
Let $X = N_{i} \setminus (N_{j} \cup \{v_j\})$, $Y = N_{j} \setminus (N_{i} \cup \{v_i\})$, and $ Z= N_{i} \cap N_{j}$. 
Since (\ref{ContraryAssump}) is invariant under negating $x$, we may assume that $|Y|\ge |X|$. 
If $|X|=0$ then $d_{j}=n-1$ and so, 
\begin{align*}
	\lambda'_2(G)
	\ge \sum_{ v_r \in V } (x_r-x_j)^2
	\ge \sum_{ v_r \in V } x_r^2
	= 1,
\end{align*}
a contradiction. 
So, $|X|\ge 1$. 
Let $G_1$ be the graph obtained from $G$ by removing the edges with both ends in $V\backslash \{v_i,v_j\}$ (see Figure~\ref{fig:G1}).

\bigskip
	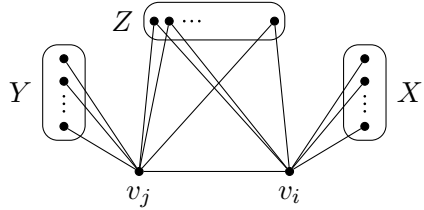
\begin{figure}[h]
	\centering
	\begin{tikzpicture}
		\vertex (y) at (-1,-.5) [label=below:$v_j$] {};
		\vertex (x) at (1,-.5) [label=below:$v_i$] {};
		
		\node (X) at (-2,.55) [draw,rounded corners=0.2cm,text width=.8em, minimum height=3.3em, label= left:{$ Y $}] {};
		\node (Y) at (2,.55) [draw,rounded corners=0.2cm,text width=.8em, minimum height=3.3em, label=right:{$ X $}] {};
		\node (Z) at (0,1.5) [draw,rounded corners=0.2cm,text width=4.2em, minimum height=1.3em, label=left:{$Z$}] {};
		\vertex (y1) at (-2,1) {};
		\vertex (y2) at (-2,.7) {};
		\fill ($(y2)+(0,-0.2)$) circle[radius=.5pt] +(0,-0.1) circle[radius=.5pt]  +(0,-0.2) circle[radius=.5pt] ;
		\vertex (y3) at (-2,.1) {};
		\vertex (x1) at (2,1) {};
		\vertex (x2) at (2,.7) {};
		\fill ($(x2)+(0,-0.2)$) circle[radius=.5pt] +(0,-0.1) circle[radius=.5pt]  +(0,-0.2) circle[radius=.5pt] ;
		\vertex (x3) at (2,.1) {};
		\vertex (z1) at (-.8,1.5) {};
		\vertex (z2) at (-.6,1.5) {};
		\fill ($(z2)+(0.2,0)$) circle[radius=.5pt] +(0.1,0) circle[radius=.5pt]  +(0.2,0) circle[radius=.5pt] ;
		\vertex (z3) at (.8,1.5) {};
		\path
		(x) edge (y)
		(x) edge (x1)
		(x) edge (x2)
		(x) edge (x3)
		(x) edge (z1)
		(x) edge (z2)
		(x) edge (z3)
		(y) edge (y1)
		(y) edge (y2)
		(y) edge (y3)
		(y) edge (z1)
		(y) edge (z2)
		(y) edge (z3)	;
	\end{tikzpicture}
	\caption{The Graph $G_1$}
	\label{fig:G1}
\end{figure}

Now we show that \begin{equation}\label{g1}
	\sum_{ \{v_r, v_s\} \in E(G_1) }{ (n-1) \ell_{rs}^2 } \ge \sum_{ \{v_r, v_s\} \in \pi(G_1) }{ \ell_{rs}^2 } .
\end{equation} 
Note that 
\begin{align*}
	\sum_{ \{v_r, v_s\} \in \pi(G_1) }{ \ell_{rs}^2 }
	\le \sum_{ \{v_r, v_s\} \subseteq N_{i} }{ \ell_{rs}^2 }  + \sum_{ \{v_r, v_s\} \subseteq N_{j} }{ \ell_{rs}^2 } - \sum_{ v_r \in Z }{ (\ell_{ir}^2 + \ell_{jr}^2) } . 
\end{align*}
So,  
\begin{align*}
	&\sum_{ \{v_r, v_s\} \in E(G_1) }{ (n-1) \ell_{rs}^2 } \\
	&= \sum_{ v_r \in N_{i} }{ d_{i} \ell_{ir}^2 } + \sum_{ v_r \in N_{j} }{ d_{j} \ell_{jr}^2 }  + \sum_{ v_r \in X\cup Z }{ |Y| \ell_{ir}^2 } 
	+  \sum_{ v_r \in Y\cup Z }{ |X| \ell_{jr}^2 } 
	\,- (|Z|+1) \ell_{ij}^2 \\
	&= p_i^2 + \sum_{ \{v_r, v_s\} \subseteq N_{i} }{ \ell_{rs}^2 }  \,+ p_j^2 + \sum_{ \{v_r, v_s\} \subseteq N_{j} }{ \ell_{rs}^2 } + \sum_{ v_r \in X\cup Z }{ |Y| \ell_{ri}^2 } 
+ \sum_{ v_r \in Y\cup Z }{ |X| \ell_{rj}^2 } - (|Z|+1) \ell_{ij}^2  \\
		&\ge \sum_{ \{v_r, v_s\} \in \pi(G_1) }{ \ell_{rs}^2 } \,+ p_i^2 + p_j^2 + \sum_{ v_r \in X }{ |Y| \ell_{ir}^2 } 
		+ \sum_{ v_r \in Y }{ |X| \ell_{jr}^2 } + \sum_{ v_r \in Z }{ 2(\ell_{ir}^2+ \ell_{jr}^2 ) } \,- (|Z|+1) \ell_{ij}^2 . 
\end{align*} 
If there is some $v_t\in Z$ such that $x_t\ge x_i$ or $x_t\le x_j$, then 
\begin{align*}
	 \sum_{ v_r \in Z }{ 2(\ell_{ir}^2+ \ell_{jr}^2 ) } 
	 \ge 2 \ell_{ij}^2 +  \sum_{ v_r \in Z\backslash\{v_t\} }{ 2 (\ell_{ir}^2+ \ell_{jr}^2 ) }
	 \ge (|Z|+1) \ell_{ij}^2,
\end{align*}
and the inequality~(\ref{g1}) follows. 
So assume that $x_j < x_r < x_i$ for any $v_r\in Z$. 

Note that if $k,\ell$ are positive integers and $z, y_1,y_2,\ldots,y_k$ are $k+1$ values such that $z + \sum_{r=1}^{k}{y_r} = s$ for some fixed $s$, then applying the Cauchy-Schwarz inequality on the vectors $(z,\sqrt{\ell} y_1,\sqrt{\ell} y_2,\ldots,\sqrt{\ell} y_k)^T$ and $(1,\frac{1}{\sqrt{\ell}},\frac{1}{\sqrt{\ell}},\ldots,\frac{1}{\sqrt{\ell}})^T$ yields
\begin{equation*}
	z^2 + \ell\sum_{r=1}^{k}{y_r^2} 
	\ge \frac{\ell s^2}{k+\ell} .
\end{equation*}

In consequence, if $a = p_i + \sum_{ v_r \in X }{ (x_r-x_i) }$ and $b = -p_j - \sum_{ v_r \in Y }{ (x_r-x_j) }$ then
\begin{align*}
	p_i^2 + |Y| \sum_{ v_r \in X }{ (x_r-x_i)^2 } 
	\ge \frac{|Y| a^2}{|X|+|Y|}, 
	\end{align*}
	and 
	\begin{align*}
	p_j^2 + |X| \sum_{ v_r \in Y }{ (x_r-x_j)^2 } 
	\ge \frac{|X| b^2}{|X|+|Y|}.
\end{align*} 

Note that 
$$a = p_i + \sum_{ v_r \in X }{ (x_r-x_i) } 
= (x_i - x_j) + \sum_{ v_r \in Z } (x_i-x_r)$$
and 
$$b = -p_j - \sum_{ v_r \in Y }{ (x_r-x_j) } 
= (x_i - x_j)+ \sum_{ v_r \in Z } (x_r-x_j).$$
Since $x_j < x_r < x_i$ for any $v_r\in Z$, we have $a\ge (x_i-x_j)$ and $b\ge (x_i-x_j)$. 
Now we can deduce that
\begin{align*}
	p_i^2 + p_j^2 + \sum_{ v_r \in X }{ |Y| \ell_{ir}^2 } 
	+ \sum_{ v_r \in Y }{ |X| \ell_{jr}^2 } + \sum_{ v_r \in Z }{ 2(\ell_{ir}^2+ \ell_{jr}^2 ) }
	&\ge  \frac{|Y| a^2 + |X| b^2}{|X|+|Y|} + |Z| \ell_{ij}^2 \\
		&\ge (|Z|+1) \ell_{ij}^2,
\end{align*} 
and the inequality~(\ref{g1}) follows. 

Let $G_2$ be the graph obtained from $G$ by removing the vertices $v_i$ and $v_j$. 
By minimality of $G$, we have 
\begin{align*}
	(n-3)\, \sum_{ \{v_r, v_s\} \in E(G_2) }{ \ell_{rs}^2 } 
	\ge \sum_{ \{v_r, v_s\} \in \pi(G_2) }{ \ell_{rs}^2 } . 
\end{align*} 
It is easy to see that if $\{v_r, v_s\}\in \pi(G)$ for some $v_r\in X, v_s\in Y$, then $\{v_r, v_s\}\in \pi(G_2)$. So, 
\begin{align*}
	\sum_{ \{v_r, v_s\} \in \pi(G_2) }{ \ell_{rs}^2 } 
	\ge \sum_{ v_r\in X, v_s\in Y:\, \{v_r, v_s\} \in \pi(G) }{ \ell_{rs}^2 } . 
\end{align*} 

Now we can deduce that 
\begin{align*}
	(n-1)\, \sum_{ \{v_r, v_s\} \in E }{ \ell_{rs}^2 } 
	&= (n-1) \sum_{ \{v_r, v_s\} \in E(G_1) }{ \ell_{rs}^2 } \,+ (n-1)\sum_{ \{v_r, v_s\} \in E(G_2) }{ \ell_{rs}^2 } \\
	&\ge \sum_{ \{v_r, v_s\} \in \pi(G_1) }{ \ell_{rs}^2 } + \sum_{ v_r\in X, v_s\in Y:\, \{v_r, v_s\} \in \pi(G) }{ \ell_{rs}^2 } \\
	&\ge \sum_{ \{v_r, v_s\} \in \pi(G) }{ \ell_{rs}^2 }, 
\end{align*} 
a contradiction. 
Now we are done. 
\end{proof}

\bigskip

\noindent \textbf{Acknowledgement.} We would like to thank the anonymous referees for valuable comments, corrections and suggestions, which resulted in an improvement of the original manuscript. 

\bigskip

\bibliographystyle{amsplain}

\begin{thebibliography}{5}

\bibitem{ba1}  B. Afshari and S. Akbari, {\em A note on the algebraic connectivity of a graph and its complement}, {Linear Multilinear A.} \textbf{69} (2019) 1248--1254. 

\bibitem{ba2} B. Afshari and S. Akbari, {\em Some results on the Laplacian spread conjecture}, {Linear Algebra Appl.} \textbf{574} (2019) 22--29. 

\bibitem{bamm}  B. Afshari et al., {\em The algebraic connectivity of a graph and its complement}, {Linear Algebra Appl.} \textbf{555} (2018) 157--162. 

\bibitem{at} F. Ashraf and B. Tayfeh-Rezaie, {\em Nordhaus--Gaddum type inequalities for Laplacian and signless Laplacian eigenvalues}, {Electron. J. Combin.} \textbf{21} (2014), Paper 3.6, 13 pp.

\bibitem{btf} Y.-H. Bao, Y.-Y. Tan, and Y.-Z. Fan, {\em The Laplacian spread of unicyclic graphs}, {Appl. Math. Lett.} \textbf{22} (2009)  1011--1015.

\bibitem{cd} X. Chen and K.C. Das, {\em Some results on the Laplacian spread of a graph}, {Linear Algebra Appl.} \textbf{505} (2016) 245--260.

\bibitem{cw} Y. Chen and L. Wang, {\em The Laplacian spread of tricyclic graphs}, {Electron. J. Combin.} \textbf{16} (2009) Research Paper 80, 18 pp. 

\bibitem{ek} M. Einollahzadeh and M. M. Karkhaneei, {\em On the lower bound of the sum of the algebraic connectivity of a graph and its complement}, {J. Combin. Theory Ser. B} \textbf{151} (2021) 235--249.


\bibitem{flt} Y.Z. Fan, S.D. Li, and Y.Y. Tan, {\em The Laplacian spread of bicyclic graphs}, {J. Math. Res. Exposition} \textbf{30} (2010) 17--28.

\bibitem{fxwl} Y.-Z. Fan et al., {\em The Laplacian spread of a tree}, {Discrete Math. Theor. Comput. Sci.} \textbf{10} (2008) 79--86.

\bibitem{lsl} P. Li, J.S. Shi, and R.L. Li, {\em Laplacian spread of bicyclic graphs}, (Chinese) {J. East China Norm. Univ. Natur. Sci. Ed.} \textbf{1} (2010) 6--9.

\bibitem{l} Y. Liu, {\em The Laplacian spread of cactuses}, {Discrete Math. Theor. Comput. Sci.} \textbf{12} (2010) 35--40.

\bibitem{lw} Y. Liu and L. Wang, {\em The Laplacian spread of bicyclic graphs}, {Advances in Mathematics\,(China)} \textbf{40} (2011) 759--764.

\bibitem{xm} Y. Xu and J. Meng, {\em The Laplacian spread of quasi-tree graphs}, {Linear Algebra Appl.} \textbf{435} (2011) 60--66.

\bibitem{yl} Z. You and B. Liu, {\em The Laplacian spread of graphs}, {Czechoslovak Math. J.} \textbf{62} (137) (2012) 155--168.

\bibitem{zsh} M. Zhai, J. Shu, and Y. Hong, {\em On the Laplacian spread of graphs}, {Appl. Math. Lett.} \textbf{24} (2011) 2097--2101.

\end{thebibliography}
%

\end{document}